\newcommand{ \N } { \mathbb{N} }
\newcommand{\w}{\omega}
\newcommand{\wstar}{\omega^*}
\newcommand{\Cstar}{C^*}
\newcommand{\cont}{\mathfrak{c}}
\newcommand{\closure}[1]{\overline{#1}}
\newcommand{\Set}[1]{{\left\lbrace {#1} \right\rbrace}}
\newcommand{\singleton}{\Set}
\def\set#1:#2{\Set{{#1} \colon {#2}}}
\begin{document}
\title{Finite compactifications of $\wstar \setminus \singleton{x}$}
\author{Max F.\ Pitz}
\author{Rolf Suabedissen}
\address{Mathematical Institute\\University of Oxford\\Oxford OX2 6GG\\United Kingdom}
\email[Corresponding author]{pitz@maths.ox.ac.uk}
\address{Mathematical Institute\\University of Oxford\\Oxford OX2 6GG\\United Kingdom}
\email{suabedis@maths.ox.ac.uk}
\subjclass[2010]{Primary	54D35; Secondary 54G05, 06E15}
\keywords{Finite compactification, $\wstar$, Cantor set, Double Arrow, $\kappa$-Parovi\v{c}enko}

\begin{abstract}
We prove that under [CH], finite compactifications of $\wstar \setminus \singleton{x}$ are homeomorphic to $\wstar$. Moreover, in each case, the remainder consists almost exclusively of $P$-points, apart from possibly one point. 

Similar results are obtained for other, related classes of spaces, amongst them $S_\kappa$, the $\kappa$-Parovi\v{c}enko space of weight $\kappa$. Also, some parallels are drawn to the Cantor set and the Double Arrow space.
\end{abstract}

\maketitle
\thispagestyle{plain}

\newtheorem{mythm}{Theorem} \numberwithin{mythm}{section} 
\newtheorem{myprop}[mythm]{Proposition}
\newtheorem{mycor}[mythm]{Corollary}
\newtheorem{mylem}[mythm]{Lemma} 
\newtheorem{myquest}[mythm]{Question} 
\newtheorem{myconj}{Conjecture}

\section{Introduction}

Compactifications of $\wstar \setminus \singleton{x}$ have received considerable attention in the literature. Fine and Gillman \cite{fine} showed that under [CH], $\wstar$ is not the Stone-\v{C}ech compactification of any of its subspaces $\wstar \setminus \singleton{x}$, whereas van Douwen, Kunen and van Mill showed later that it is consistent with the usual axioms of set theory ZFC that this situation always occurs \cite{douwenkunenmill}. 

In \cite{pitz} the present authors obtained further structural results about the Stone-\v{C}ech compactification of $\wstar \setminus \singleton{x}$ under [CH]. Under the assumption $\kappa = \kappa^{<\kappa}$ we also extended our results to $S_\kappa$, the Stone space of the unique $\kappa$-saturated Boolean algebra of size $\kappa$. These spaces can be seen as the natural generalisation of $\wstar$ under [CH] to infinite cardinals $\kappa$ with the property $\kappa = \kappa^{<\kappa}$.

Yet, despite these far-reaching results regarding Stone-\v{C}ech compactifications of $\wstar \setminus \singleton{x}$,  to our knowledge nothing has appeared in print about finite compactifications of these spaces. In this paper we prove the somewhat surprising result that under [CH] every finite compactification of $\wstar \setminus \singleton{x}$ is again homeomorphic to $\wstar$. Moreover, each remainder consists almost exclusively of $P$-points, apart from possibly one point. This is accompanied by the result that there are arbitrarily large finite compactifications of $\wstar \setminus \singleton{x}$.

Corresponding results also hold for spaces $S_\kappa \setminus \singleton{x}$. A strong emphasis is laid on Lemma \ref{nicelemma}, which can serve as a roadmap of how to generalise statements from $\wstar$ to $S_\kappa$ that rely on the fact that $\wstar$ does not contain converging sequences.

We start our investigation in Section \ref{section2} with a general framework for proving that all finite compactifications of a given space look alike, illustrated by means of the Cantor set, the Double Arrow space and other related spaces. We then proceed in Sections \ref{section3} and \ref{section4} to $\wstar$ (under [CH]) and to $S_\kappa$ (for cardinals $\kappa$ such that $\kappa = \kappa^{<\kappa}$), and prove, after a proper introduction of these spaces, the announced results. 

The case $S_\kappa$ draws together the previous cases as $S_\w$ is homeomorphic to the Cantor set $C$, and $S_{\w_1}$ is homeomorphic to $\wstar$ under [CH].

\section{Theory and first examples}
\label{section2}

We begin with a sufficient condition for zero-dimensional locally compact Hausdorff spaces to have only one homeomorphism type amongst their finite compactifications. Recall that a space is \emph{zero-dimensional} if it has a basis of clopen (closed-and-open) sets.

\begin{mylem}
\label{theorem1}
Let $X$ be a zero-dimensional compact Hausdorff space such that $X \oplus X$ is homeomorphic to $X$ and
\begin{enumerate}
\item[$(\star)$]  for every point $x$ of $X$, every clopen non-compact subset $A$ of $X \setminus \singleton{x}$ is homeomorphic to $X \setminus \singleton{x_A}$ for some $x_A \in X$.
\end{enumerate}
Then, for all $x$, all finite compactifications of $X \setminus \singleton{x}$ are homeomorphic to $X$.
\end{mylem}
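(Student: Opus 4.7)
The plan is to show that any finite compactification $Y$ of $Z := X \setminus \singleton{x}$ with remainder $F = \singleton{y_1, \dots, y_n}$ splits as a disjoint sum of $n$ clopen sets, each homeomorphic to $X$. Combined with the idempotence $X \oplus X \cong X$ and a trivial induction on $n$, this yields $Y \cong X$.

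The only substantive step is to establish that $Y$ itself is zero-dimensional, after which the decomposition is straightforward. At points $z \in Z$ this is automatic: by local compactness and zero-dimensionality of $Z$, every $z$ has a compact clopen neighborhood in $Z$, and such a neighborhood remains clopen in $Y$ because $Z$ is open in $Y$ while compact subsets of the Hausdorff space $Y$ are closed. The delicate case is a point $y \in F$. Given an open $U \ni y$ in $Y$, I would first use normality to find an open $V$ with $y \in V \subseteq \closure{V} \subseteq U \setminus (F \setminus \singleton{y})$, so that the boundary $\closure{V} \setminus V$ is a compact subset of $Z$. Covering this boundary by finitely many compact clopen subsets of $Z$ contained in $U \cap Z$ yields a compact clopen set $K$ of $Z$ with $\closure{V} \setminus V \subseteq K \subseteq U$, and then $V \cup K$ is a clopen neighborhood of $y$ inside $U$: it is open, and its closure equals $\closure{V} \cup K = (V \cup (\closure{V} \setminus V)) \cup K = V \cup K$.

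Given that $Y$ is zero-dimensional, I can choose pairwise disjoint clopen sets $U_1, \dots, U_n \subseteq Y$ with $y_i \in U_i$ and $U_i \cap F = \singleton{y_i}$; absorbing the compact clopen leftover $Y \setminus (U_1 \cup \cdots \cup U_n) \subseteq Z$ into $U_1$ produces a clopen partition $U_1', U_2, \dots, U_n$ of $Y$ with $y_i$ in the $i$-th part (writing $U_i' = U_i$ for $i \geq 2$). Setting $A_i := U_i' \cap Z$, each $A_i$ is clopen in $Z$ and non-compact, since by density of $Z$ in $Y$ the point $y_i$ lies in $\closure{A_i}$ but not in $A_i$. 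Hypothesis $(\star)$ then gives $A_i \cong X \setminus \singleton{x_{A_i}}$ for some $x_{A_i} \in X$, and since $U_i'$ is a one-point compactification of $A_i$ --- unique up to homeomorphism for locally compact Hausdorff spaces --- we obtain $U_i' \cong X$. Therefore $Y$ is homeomorphic to a finite disjoint sum of copies of $X$, which collapses to $X$ by the assumption $X \oplus X \cong X$ and induction. The principal obstacle is the zero-dimensionality step at points of $F$; everything else follows routinely from $(\star)$ and the idempotence assumption.
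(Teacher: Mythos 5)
Your proof is correct and follows essentially the same route as the paper: establish that the finite compactification is zero-dimensional, partition it into clopen neighbourhoods of the remainder points, apply $(\star)$ together with uniqueness of the one-point compactification to identify each piece with $X$, and collapse via $X \oplus X \cong X$. The only difference is that where the paper cites Woods for the zero-dimensionality of finite compactifications of locally compact zero-dimensional spaces, you supply a direct (and correct) argument, and you also spell out the routine details of absorbing the compact leftover and checking non-compactness of the pieces.
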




\begin{proof}
Let $Z$ be a finite compactification of $X \setminus \singleton{x}$ with remainder consisting of points $\infty_1,\ldots, \infty_n$. By \cite[2.3]{Woods}, every finite compactification of a locally compact zero-dimensional space is zero-dimensional. Hence, there is a partition of $Z$ into $n$ disjoint clopen sets $A_i$ such that $\infty_i \in A_i$. 

By property ($\star$), $A_i \setminus \singleton{\infty_i}$ is homeomorphic to $X \setminus \singleton{x_{A_i}}$. Uniqueness of the one-point compactification gives $A_i \cong X$ and hence $Z$ is, after applying $X \oplus X \cong X$ iteratively, homeomorphic to $X$.  
\end{proof}

This lemma lies at the heart of our main result regarding finite compactifications of $\wstar \setminus \singleton{x}$. Surprisingly, despite its strong assumptions, it also applies to a variety of other interesting spaces. 

Spaces which only have $\lambda$ different homeomorphism types amongst their open subspaces (for some cardinal $\lambda$) are said to be of \emph{diversity} $\lambda$ \cite{diversity}. One checks that Lemma \ref{theorem1} applies to all compact Hausdorff spaces of diversity two, which are known to be zero-dimensional \cite{zerodiv}. In particular, it applies to the Cantor space $C$, which can be characterised as the unique compact metrizable space of diversity two \cite{Cantor}, and to the Alexandroff Double Arrow space $D$ and also to their product $D \times C$. 

In a compact Hausdorff space $X$ of diversity two, any subspace $X \setminus \singleton{x}$ is homeomorphic to $X \setminus \Set{x_1,\ldots,x_n}$ and therefore has arbitrarily large finite compactifications. These are the cases where Lemma \ref{theorem1} is most valuable. The next lemma shows that not much is needed for this scenario to occur. The proof is a simple induction.
  
\begin{mylem}
\label{lemm2}
Let $X$ be a topological space such that for all $x$, all finite compactifications of $X \setminus \singleton{x}$ are homeomorphic to $X$. If all spaces $X \setminus \singleton{x}$ have two-point compactifications, they have arbitrarily large finite compactifications. \qed
\end{mylem}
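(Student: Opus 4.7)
The plan is to induct on $n \geq 1$, establishing that every $X \setminus \singleton{x}$ admits an $n$-point compactification. The case $n = 1$ I would dispatch immediately: the hypothesis provides a two-point compactification of $X \setminus \singleton{x}$, so $X \setminus \singleton{x}$ is not compact, which means $x$ is not isolated in $X$, whence $X \setminus \singleton{x}$ is dense in $X$ and $X$ itself serves as a one-point compactification. The case $n = 2$ is exactly the hypothesis.

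For the inductive step $n \to n+1$ with $n \geq 2$, I would fix $x \in X$ and take a two-point compactification $Y$ of $X \setminus \singleton{x}$ with remainder $\Set{\infty_1, \infty_2}$. The main hypothesis of the lemma supplies a homeomorphism $\varphi \colon Y \to X$; setting $y = \varphi(\infty_1)$ yields $Y \setminus \singleton{\infty_1} \cong X \setminus \singleton{y}$. The inductive hypothesis then gives an $n$-point compactification of $X \setminus \singleton{y}$, which pulls back along $\varphi^{-1}$ to an $n$-point compactification $W$ of $Y \setminus \singleton{\infty_1}$ with some remainder $\Set{\alpha_1, \ldots, \alpha_n}$. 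Because $X \setminus \singleton{x}$ is dense in $Y$ and hence in the subspace $Y \setminus \singleton{\infty_1}$, and the latter is in turn dense in $W$, transitivity of density makes $W$ a compactification of $X \setminus \singleton{x}$, and its remainder $\Set{\infty_2, \alpha_1, \ldots, \alpha_n}$ consists of $n+1$ points.

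I do not expect any genuine obstacle; the author's remark that ``the proof is a simple induction'' appears apt. The only minor checks are that $\infty_2$ is distinct from each $\alpha_i$ (immediate, since $\infty_2 \in Y \setminus \singleton{\infty_1}$ while each $\alpha_i$ lies outside that set) and that density propagates correctly through the two successive enlargements, both of which are routine.
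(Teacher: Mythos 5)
Your proof is correct and is exactly the ``simple induction'' the paper alludes to (the paper gives no further details): split off one point of a two-point compactification, use the hypothesis to identify the result with some $X\setminus\singleton{y}$, and recurse, with density passing through the successive enlargements. Nothing further is needed.
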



The following example of the Cantor cube $2^\kappa$ for uncountable $\kappa$ shows that the assumptions in Lemma \ref{lemm2} cannot be considerably weakened. Since $\beta (2^\kappa \setminus \singleton{x})=2^\kappa$ \cite[Thm.\ 2]{Glicksberg}, these spaces have a unique compactification. 

The cube $2^\kappa$ is a zero-dimensional compact Hausdorff space with $2^\kappa \cong 2^\kappa  \oplus 2^\kappa$. For property $(\star)$, let $A \subset 2^\kappa \setminus \singleton{x}$ be a clopen non-compact subset. Since $2^\kappa \setminus \singleton{x}$ does not have a 2-point compactification, $A \cup \singleton{x}$ must be clopen in $2^\kappa$. But every clopen set of $2^\kappa$ can be written 
as a disjoint union of finitely many product-basic open sets, which are homeomorphic to $2^\kappa$. Hence $A \cup \singleton{x} \cong 2^\kappa$. 

We conclude that Lemma \ref{theorem1} applies, but restricts to the obvious assertion that the one-point compactification of $2^\kappa \setminus \singleton{x}$ is homeomorphic to $2^\kappa$.


\section{The space $\wstar$}
\label{section3}

This section contains the proof that under [CH] all finite compactifications of $\wstar \setminus \singleton{x}$ are homeomorphic to $\wstar$. The plan for attack is clear: we want to apply Lemma \ref{theorem1} to $\wstar$. That we may do so will be justified by Lemma \ref{lemma123}. Before that, we recall some characteristics of the space $\wstar$. 
 
Parovi\v{c}enko's theorem says that under [CH], the Stone-\v{C}ech remainder $\wstar$ of the countable discrete space is topologically characterised as the unique compact zero-dimensional F-space of weight $\cont$ without isolated points in which each non-empty $G_\delta$-set has non-empty interior \cite[1.2.4]{Intro}. 

Recall that an $F$-space is a space where all cozero sets are $\Cstar$-embedded. In normal spaces, the $F$-space property is equivalent to pairs of disjoint cozero sets having disjoint closure, and is therefore closed hereditary \cite[1.2.2]{Intro}. Also, it is known that infinite closed subsets of compact $F$-space contain a copy of $\beta \w$, and therefore have large cardinality. In particular, $\wstar$ does not contain converging sequences.

A $P$\emph{-point} is a point $p$ such that every countable intersection of neighbourhoods of $p$ contains again an open neighbourhood of $p$. In a zero-dimensional space, a point $x$ is a not a $P$-point if and only if there exists an open $F_\sigma$-set containing $x$ in its boundary.

\begin{mylem}
\label{lemma123}
\textnormal{[CH].} The space $\wstar$ has property $(\star)$, i.e.\ the one-point compactification of a clopen non-compact subset of $\wstar \setminus \singleton{x}$ is homeomorphic to $\wstar$.
\end{mylem}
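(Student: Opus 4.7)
The plan is to invoke Parovi\v{c}enko's characterisation of $\wstar$ under [CH]. Let $A$ be a clopen non-compact subset of $\wstar \setminus \singleton{x}$. One first observes that $A$ is open in $\wstar$ (as $\wstar \setminus \singleton{x}$ is) and that $\closureIn{A}{\wstar} = A \union \singleton{x}$: the closure can add at most $x$, and it must add $x$, else $A$ would be clopen in $\wstar$ and so compact. Zero-dimensionality of $\wstar$ lets us separate $x$ from any compact subset of $A$ by a clopen set, and a routine check shows that the subspace $Y := A \union \singleton{x}$ of $\wstar$ is precisely the one-point compactification of $A$. It therefore suffices to show $Y \cong \wstar$.

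Being a closed subspace of $\wstar$, $Y$ is automatically compact Hausdorff, zero-dimensional, an $F$-space, and of weight at most $\cont$. Since $A$ is infinite and closed in the compact $F$-space $\wstar$, it contains a copy of $\beta \w$, so $Y$ has cardinality $2^\cont$ and hence weight exactly $\cont$. Points of $A$ are non-isolated in $Y$ because they are non-isolated in $\wstar$, and $x$ is a limit point of $A$ by construction. Thus all of Parovi\v{c}enko's axioms are in place except the $G_\delta$-condition, which is the heart of the matter.

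Let $G = \bigcap_n U_n$ be a non-empty $G_\delta$-set in $Y$. If $G \intersect A \not= \emptyset$, then each $U_n \intersect A$ is open in the open subspace $A$ of $\wstar$, so $G \intersect A$ is a non-empty $G_\delta$ in $\wstar$ and contains a non-empty open set by Parovi\v{c}enko's property for $\wstar$ itself; this set is open in $Y$ as well. The remaining case, $G = \singleton{x}$, is the main obstacle. Suppose for contradiction that $\singleton{x}$ were $G_\delta$ in $Y$. Since $Y$ is closed in the zero-dimensional compact Hausdorff space $\wstar$, clopen neighbourhoods of $x$ in $Y$ extend to clopen neighbourhoods $V_n$ of $x$ in $\wstar$, which, on replacing $V_n$ by $V_1 \intersect \cdots \intersect V_n$, may be taken decreasing with $\bigcap_n V_n \intersect A = \emptyset$. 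Writing $C_n := \wstar \setminus V_n$, the sets $A \intersect C_n$ form an increasing chain of compact clopen subsets of $A$ whose union is $A$. Non-compactness of $A$ prevents the chain from stabilising, so picking $a_n \in \of{A \intersect C_{n+1}} \setminus \of{A \intersect C_n}$ yields a sequence whose cluster points all lie in $\closureIn{A}{\wstar} \setminus A = \singleton{x}$; a unique cluster point in a compact Hausdorff space forces $a_n \to x$, contradicting the absence of non-trivial convergent sequences in $\wstar$. This last step -- the appeal to the non-sequential nature of $\wstar$ -- is the crucial ingredient, and presumably what the authors abstract in Lemma \ref{nicelemma} for the later generalisation to $S_\kappa$.
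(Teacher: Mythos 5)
Your proof is correct and follows essentially the same route as the paper: verify Parovi\v{c}enko's axioms for $A \union \singleton{x}$, the only nontrivial one being the $G_\delta$-condition, whose only problematic case is $\singleton{x}$ being $G_\delta$, which is refuted by producing a nontrivial convergent sequence in $\wstar$. Your explicit construction of the sequence $(a_n)$ simply unpacks the paper's one-line appeal to the fact that a $G_\delta$-point of a compact Hausdorff space has countable character.
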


\begin{proof}
Let $A$ be a clopen non-compact subset of $\wstar \setminus \singleton{x}$. Taking $A \cup \singleton{x}$, a closed subset of $\wstar$, as representative of its one-point compactification, we see that it is a zero-dimensional compact $F$-space of weight $\cont$ without isolated points. 

Suppose that $U \subset A \cup \singleton{x}$ is a non-empty $G_\delta$-set. If $U$ has empty intersection with $A$, then the singleton $U=\singleton{x}$ is a $G_\delta$-set, and hence has countable character  in the compact Hausdorff space $A \cup \singleton{x}$. It follows that there is a non-trivial sequence in $\wstar$ converging to $x$, a contradiction. Thus, $U$ intersects the open set $A$ and their intersection is a non-empty $G_\delta$-set of $\wstar$ with non-empty interior.

An application of Parovi\v{c}enko's theorem completes the proof.
\end{proof}

\begin{mythm}
\label{ClassificationCompactifications}
\textnormal{[CH].} Let $x$ be a point in $\wstar$. Every finite compactification of $\wstar \setminus \singleton{x}$ is homeomorphic to $\wstar$. Moreover, at most one point in the remainder of a finite compactification is not a $P$-point.
\end{mythm}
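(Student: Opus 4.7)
The approach splits the theorem into its two assertions. The first—that every finite compactification of $\wstar \setminus \singleton{x}$ is homeomorphic to $\wstar$—will follow immediately from Lemma \ref{theorem1} applied to $\wstar$: zero-dimensional compact Hausdorffness is standard, $\wstar \disjointSum \wstar \cong \wstar$ is a direct consequence of Parovi\v{c}enko's characterization (the disjoint sum is still a compact zero-dimensional $F$-space of weight $\cont$ without isolated points in which every non-empty $G_\delta$ has non-empty interior), and property $(\star)$ has just been supplied by Lemma \ref{lemma123}.

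For the $P$-point statement, I would take the decomposition $Z = A_1 \disjointSum \cdots \disjointSum A_n$ into clopen pieces with $\infty_i \in A_i$ from the proof of Lemma \ref{theorem1}, and set $B_i = A_i \setminus \singleton{\infty_i}$. The key preparatory observation is that each $B_i \cup \singleton{x}$, viewed as a subspace of $\wstar$, is closed in $\wstar$: any $y$ outside this set lies in some $B_j$ with $j \neq i$, which is an open neighborhood of $y$ in $\wstar$ disjoint from $B_i \cup \singleton{x}$. Hence $B_i \cup \singleton{x}$ is a compactification of $B_i$ with one-point remainder, so by uniqueness of the one-point compactification it is homeomorphic to $A_i$ via the map which is the identity on $B_i$ and sends $\infty_i$ to $x$.

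The core of the argument is then the following. Assume for contradiction that both $\infty_i$ and $\infty_j$ fail to be $P$-points of $Z$ for some $i \neq j$. Using the characterization of non-$P$-points in zero-dimensional spaces recalled before Lemma \ref{lemma123}, I pick open $F_\sigma$ sets $W, W' \subseteq Z$ with $\infty_i \in \partial W$ and $\infty_j \in \partial W'$. After restricting to $A_i$ and $A_j$ I may assume $W \subseteq B_i$ and $W' \subseteq B_j$. Transporting along the homeomorphisms $A_i \cong B_i \cup \singleton{x}$ and $A_j \cong B_j \cup \singleton{x}$, I obtain disjoint open $F_\sigma$ subsets of $\wstar$, contained in $B_i$ and $B_j$ respectively, each having $x$ in its closure in $\wstar$. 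In a zero-dimensional compact Hausdorff space every open $F_\sigma$ is a countable union of clopens and hence a cozero set, so these two sets are disjoint cozero sets in the $F$-space $\wstar$; the $F$-space property forces their closures to be disjoint, contradicting the common point $x$.

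The main obstacle I foresee is the topological bookkeeping around the two different topologies near $x$ and $\infty_i$—specifically, verifying that the homeomorphism $A_i \cong B_i \cup \singleton{x}$ transports an open $F_\sigma$ set together with its limit point from $Z$ faithfully back into $\wstar$, distinguishing carefully between ``closed in $Z$'' and ``closed in $\wstar$''. Once this identification is secured, the $F$-space property of $\wstar$ delivers the contradiction in a single stroke.
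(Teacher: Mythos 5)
Your proposal is correct and follows essentially the same route as the paper: the first assertion via Lemma \ref{theorem1} and Lemma \ref{lemma123}, and the second by identifying each $A_i$ with $A_i'\cup\singleton{x}$ through uniqueness of the one-point compactification and transporting the open $F_\sigma$-sets witnessing the two non-$P$-points back into $\wstar$, where they violate the $F$-space property at $x$. Your extra bookkeeping (closedness of $B_i\cup\singleton{x}$ in $\wstar$ and the cozero reformulation of open $F_\sigma$-sets) only makes explicit what the paper leaves implicit.
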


\begin{proof}
The first assertion follows immediately from Lemmas \ref{lemma123} and \ref{theorem1}.

For the second assertion, suppose there some remainder of $\wstar \setminus \singleton{x}$ contains two non-$P$-points $\infty_1$ and $\infty_2$. Let $A_1$ and $A_2$ be disjoint clopen neighbourhoods of $\infty_1$ and $\infty_2$ as in Theorem \ref{theorem1}, i.e.\ such that $A'_i = A_i \setminus \singleton{\infty_i}$ are clopen subsets of $\wstar \setminus \singleton{x}$. As the points at infinity are non-$P$-points, there are open $F_\sigma$-sets $F_1$ and $F_2$ with $F_i \subset A'_i$ such that $\infty_i \in \closure{F}_i \setminus F_i$. However, since $A'_i \cup \singleton{x} \cong A'_i \cup \singleton{\infty_i}$, we see that in $\wstar$ the disjoint open $F_\sigma$-sets $F_1$ and $F_2$ both limit onto $x$, contradicting the $F$-space property. 
 \end{proof} 

By a well-known result of Fine \& Gillman using [CH], every space $\wstar \setminus \singleton{x}$ splits into complementary clopen non-compact sets for all points $x$ of $\wstar$ \cite{Gillmann}. This gives rise to a two-point compactification of $\wstar \setminus \singleton{x}$. Lemma \ref{lemm2}, with Theorem \ref{ClassificationCompactifications}, now shows that $\wstar \setminus \singleton{x}$ has arbitrarily large finite compactifications. This also implies the well-known result that under [CH] the space $\wstar$ contains $P$-points. 


One may ask what of this remains true in absence of [CH]. In the above proof, [CH] is needed only in applying Parovi\v{c}enko's theorem. In ZFC, therefore, any finite compactification of $\wstar \setminus \singleton{x}$ is a Parovi\v{c}enko space of weight $\cont$ such that at most one point in the remainder is not a $P$-point.

Is it consistent with ZFC that no Parovi\v{c}enko space of weight $\cont$ contains $P$-points?


\section{The space $S_\kappa$}
\label{section4}
The spaces $S_\kappa$ are the natural generalisation of the Parovi\v{c}enko space $\wstar$ under [CH] to larger cardinals $\kappa$ with the property $\kappa=\kappa^{<\kappa}$. 

In a zero-dimensional space $X$, the \emph{type} of an open subset $U$ of $X$ is the least cardinal number $\tau$ such that $U$ can be written as a union of $\tau$-many clopen subsets of $X$. A zero-dimensional space where open subsets of type less than $\kappa$ are $\Cstar$-embedded is called $F_\kappa$-\emph{space} \cite[Ch. 14]{Ultrafilters}. Note that in a zero-dimensional compact space the notions of $F$- and $F_{\w_1}$-space coincide. It is well-known that being an $F_\kappa$-space implies that disjoint open sets of type less than $\kappa$ have disjoint closure and that under normality, the implication reverses \cite[6.5]{Ultrafilters}.

A $\kappa$-\emph{Parovi\v{c}enko space} is a zero-dimensional compact $F_\kappa$-space of weight $\kappa^{<\kappa}$ without isolated points such that every non-empty intersection of less that $\kappa$-many open sets has non-empty interior. Under $\kappa = \kappa^{<\kappa}$ there is a up to homeomorphism unique $\kappa$-Parovi\v{c}enko space of weight $\kappa$, denoted by $S_\kappa$ \cite[Ch. 6]{Ultrafilters}. Assuming [CH], we have $S_{\w_1} \cong \wstar$.

A $P_\kappa$\emph{-point} is a point $p$ such that the intersection of less than $\kappa$-many neighbourhoods of $p$ contains again an open neighbourhood of $p$. By zero-dimensionality, a point $x \in S_\kappa$ is a not a $P_\kappa$-point if and only if there exists an open set of type less than $\kappa$ containing $x$ in its boundary. Again, a $P_{\w_1}$-point is simply a $P$-point.

In this section we generalise results from $S_{\w_1}=\wstar$ to general $S_\kappa$, assuming $\kappa=\kappa^{<\kappa}$ throughout. The challenge lies in the fact that Lemma \ref{lemma123} does not carry through without extra work. Indeed, Lemma \ref{lemma123} rested on two corner stones: that in normal spaces, the $F$-space property is closed-hereditary and that every infinite closed subset of $S_{\w_1}$ has the same cardinality as $S_{\w_1}$. Both assertions do not carry over to $S_\kappa$, as it contains a closed copy of $\beta \w$.

The following shows how to circumvent these obstacles. 

\begin{mylem}
\label{nicelemma}
Let $x$ be a point in $S_\kappa$. If $A$ is a clopen, non-compact subset of $S_\kappa \setminus \singleton{x}$ then its type in $S_\kappa$ equals $\kappa$. 
\end{mylem}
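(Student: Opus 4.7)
The plan is to argue by contradiction: suppose $\tau := \textnormal{type}(A) < \kappa$ and deduce a contradiction from the $F_\kappa$-property of $S_\kappa$. Since $A$ is non-compact, a finite union of clopens cannot equal $A$, so $\tau$ is infinite; minimality of $\tau$ immediately forces $\tau$ to be regular. By a recursive construction along $\tau$ (using regularity and the observation that every finite union of the $C_\alpha$'s is a proper subset of $A$) one can arrange to write $A = \bigcup_{\alpha < \tau} C_\alpha$ as a strictly increasing chain of clopen subsets of $S_\kappa$.

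Set $E_\alpha := C_{\alpha+1} \setminus C_\alpha$ — each is a non-empty clopen subset of $S_\kappa$, and they are pairwise disjoint. Partition the index set $\tau = T_1 \sqcup T_2$ into two cofinal pieces (e.g.\ by parity of ordinals) and put $F_i := \bigcup_{\alpha \in T_i} E_\alpha$. Then $F_1$ and $F_2$ are disjoint open subsets of $S_\kappa$, each of type at most $\tau < \kappa$.

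The key step is showing $x \in \closure{F_i}$ for both $i$. Given an arbitrary clopen neighbourhood $W$ of $x$, the set $(S_\kappa \setminus W) \cap A$ is closed in $S_\kappa$: its closure lies in $(S_\kappa \setminus W) \cap \closure{A} = (S_\kappa \setminus W) \cap (A \cup \singleton{x})$ which, because $x \in W$, equals $(S_\kappa \setminus W) \cap A$. This set is therefore compact, so by the open cover $\{C_\alpha\}$ we have $(S_\kappa \setminus W) \cap A \subset C_{\alpha_W}$ for some $\alpha_W < \tau$, and hence $A \setminus C_{\alpha_W} \subset W$. For any $\alpha \in T_i$ with $\alpha \geq \alpha_W$ (which exist by cofinality of $T_i$), the non-empty clopen $E_\alpha = C_{\alpha+1} \setminus C_\alpha$ is contained in $A \setminus C_{\alpha_W} \subset W$, whence $W \cap F_i \neq \emptyset$.

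Now $F_1$ and $F_2$ are disjoint open subsets of $S_\kappa$ of type $< \kappa$, so by the $F_\kappa$-property their closures are disjoint, contradicting $x \in \closure{F_1} \cap \closure{F_2}$. Hence $\textnormal{type}(A) \geq \kappa$, and as $\textnormal{type}(A) \leq \kappa$ holds trivially we conclude $\textnormal{type}(A) = \kappa$. The main obstacle is producing the strictly increasing chain of clopens — a recursive argument relying on the regularity of $\tau$ — after which the compactness of $(S_\kappa \setminus W) \cap A$ and the $F_\kappa$-property of $S_\kappa$ combine cleanly to deliver the contradiction, without any appeal to closed-hereditary properties or cardinality arguments on closed subsets.
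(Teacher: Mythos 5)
There is a genuine gap at the step where you ``arrange to write $A=\bigcup_{\alpha<\tau}C_\alpha$ as a strictly increasing chain of clopen subsets of $S_\kappa$.'' The recursion you sketch works at successor stages (a finite union of clopen sets is clopen, and since $A$ is non-compact no finite union of the $C_\alpha$'s exhausts $A$), but it breaks down at limit stages $\lambda<\tau$: there the union $\bigcup_{\beta<\lambda}D_\beta$ built so far is merely open, and you would need a clopen subset of $S_\kappa$ squeezed between it and $A$. Such a clopen set is closed, so it can exist only if $x\notin\closure{\bigcup_{\beta<\lambda}D_\beta}$ (it would contain that closure yet lie inside $A$, which misses $x$), and nothing guarantees this can be arranged: $S_\kappa$ is an $F_\kappa$-space but closures of open sets of small type need not be clopen, and the tendency of such unions of clopen pieces to limit onto $x$ is precisely the phenomenon the lemma is about. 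In other words, the existence of your chain is essentially as strong as the statement you are trying to refute, and it is left unproven. A smaller but related problem: the assertion that minimality of $\tau$ ``immediately forces $\tau$ to be regular'' is unjustified, since a union of fewer than $\tau$ of the $C_\alpha$'s is only open, not clopen, so a cover of singular length cannot obviously be regrouped into a shorter clopen cover.

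For comparison, the paper never produces a chain. It keeps the given cover $A=\bigcup_{\alpha<\tau}A_\alpha$ and instead recursively chooses \emph{pairwise disjoint} clopen sets $V_\alpha\subset A\setminus\bigcup_{\beta<\alpha}A_\beta$, which is possible because in $S_\kappa$ the union of fewer than $\kappa$ many closed nowhere dense sets is never dense (the cited fact \cite[14.5]{Ultrafilters}); no capping at limit stages is needed. Splitting the index set into two disjoint cofinal pieces, the unions $V_f$ and $V_g$ are disjoint open sets of type less than $\kappa$, and a compactness argument (if $\closure{V_f}\subset A$ it would be covered by finitely many $A_\beta$, impossible since $V_\alpha$ avoids all $A_\beta$ with $\beta<\alpha$) shows both limit onto $x$, contradicting the $F_\kappa$-property. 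Your second half --- the compactness-plus-cofinality argument showing $x\in\closure{F_1}\cap\closure{F_2}$ and the appeal to the $F_\kappa$-property --- is sound and close in spirit to the paper's; what is missing is a correct construction of the disjoint clopen pieces, for which the paper's Baire-type argument (or something equivalent) is needed in place of the increasing chain.
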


\begin{proof}
Suppose for a contradiction that there exists a clopen, non-compact subset $A$ of $S_\kappa \setminus \singleton{x}$ of $S_\kappa$-type $\tau < \kappa$. It suffices to consider uncountable $\kappa$. Find a representation 
$$A=\bigcup_{\alpha < \tau} A_\alpha$$ where all $A_\alpha$ are clopen subsets of $S_\kappa$. We claim that there is a collection $\Set{V_\alpha}_{\alpha<\tau}$ of pairwise disjoint clopen sets of $S_\kappa$ such that $V_\alpha \subset A \setminus \bigcup_{\beta < \alpha}A_\beta$ for all $\alpha < \tau$. 

We proceed by transfinite induction. Choose a clopen subset $V_0$ in the non-empty open set $A \setminus A_0$. 
Now consider $\alpha < \tau$ and suppose that $V_\beta$ have been defined for all $\beta < \alpha$. By \cite[14.5]{Ultrafilters}, the set $U_\alpha= \bigcup_{\beta < \alpha} A_\beta \cup V_\beta$ cannot be dense in $A$, and we may find a clopen set $V_\alpha$ in the interior of $A \setminus U_\alpha$. This completes the inductive construction.

Finally, let $f$ and $g$ be disjoint cofinal subsets of $\tau$. We define disjoint sets
$$V_f=\bigcup_{\alpha \in f} V_{\alpha} \quad \text{and} \quad V_g=\bigcup_{\alpha \in g} V_{\alpha}$$ and claim that both sets limit onto $x$, contradicting the $F_\kappa$-space property of $S_\kappa$. 

Suppose the claim was false. Then $\closure{V_f}$ is a subset of $A=\bigcup_{\alpha < \tau} A_\alpha$. By compactness, there is a finite set $F \subset \tau$ such that $\closure{V_f} \subset \bigcup_{\beta \in F} A_\beta$. 
But there are sets $V_\alpha$ with arbitrarily large index constituting to $V_f$, a contradiction.
\end{proof}

An interesting corollary of this is that for uncountable $\kappa$, the boundary of every open set in $S_\kappa$ of type less than $\kappa$ is infinite, and hence, as a closed subset, of cardinality at least $2^\cont$.

\begin{mylem}
\label{superlemma}
Let $x$ be a point in $S_\kappa$. If $A$ is a clopen, non-compact subset of $S_\kappa \setminus \singleton{x}$ then its one-point compactifications is homeomorphic to $S_\kappa$. 
\end{mylem}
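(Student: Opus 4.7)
The plan is to show that $Y = A \cup \singleton{x}$ satisfies the topological characterization of the $\kappa$-Parovi\v{c}enko space of weight $\kappa$, and then to invoke its uniqueness. Since $A$ is non-compact, $x$ lies in $\closure{A}^{S_\kappa}$; and since $A$ is clopen in $S_\kappa \setminus \singleton{x}$, no other point is newly added, so $Y = \closure{A}^{S_\kappa}$ is closed in $S_\kappa$. In particular $Y$ is compact, Hausdorff and zero-dimensional, and it is manifestly the one-point compactification of $A$, so $Y \cong S_\kappa$ will yield the lemma.

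I would then check the remaining axioms. No point of $A$ is isolated in $Y$ because none is isolated in $S_\kappa$, and $x$ is a limit of $A$ by the previous paragraph. The weight of $Y$ is at most $\kappa$ by heredity, and at least $\kappa$ because otherwise $A$ would decompose into fewer than $\kappa$ clopens of $S_\kappa$, directly contradicting Lemma \ref{nicelemma}. For the $F_\kappa$-space property I would exploit normality of $Y$ and verify that disjoint open sets $U, V$ of type $<\kappa$ in $Y$ have disjoint closures in $Y$. The key observation is that any clopen subset of $Y$ missing $x$ is clopen in $S_\kappa$ (being clopen in $A$, itself clopen in $S_\kappa$). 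Consequently both $V$ and $U \setminus \singleton{x}$ have type $<\kappa$ in $S_\kappa$, so the $F_\kappa$-property of $S_\kappa$ separates their $S_\kappa$-closures; and if $x$ lies in $U$ then $V \subseteq Y \setminus U$ forces $x \notin \closure{V}^Y$ automatically.

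The $\pi$-property is where I anticipate the main obstacle, and it is precisely where Lemma \ref{nicelemma} does its intended work. Given a family $\Set{U_\alpha : \alpha < \tau}$ of open sets in $Y$ with $\tau < \kappa$ and non-empty intersection $I$, the case $I \cap A \neq \emptyset$ reduces directly to the $\pi$-property of $S_\kappa$ applied to $\Set{U_\alpha \cap A : \alpha < \tau}$, since that intersection sits inside $A$ and its $S_\kappa$-interior is open in $Y$. The worrisome case $I = \singleton{x}$ is dispatched as follows: after shrinking each $U_\alpha$ to a clopen neighborhood of $x$ in $Y$, the complements $Y \setminus U_\alpha$ are clopen in $S_\kappa$, lie inside $A$, and together cover $A$. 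This presents $A$ as a union of fewer than $\kappa$ clopens of $S_\kappa$, contradicting Lemma \ref{nicelemma}, so this case cannot arise. With all the axioms of a $\kappa$-Parovi\v{c}enko space of weight $\kappa$ verified, uniqueness delivers $Y \cong S_\kappa$.
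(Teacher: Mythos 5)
Your overall strategy --- verify the $\kappa$-Parovi\v{c}enko axioms for $Y=A\cup\singleton{x}$ and invoke uniqueness --- is exactly the paper's, and your treatment of the weight, of isolated points, and of the intersection property (in particular the reduction of the case $I=\singleton{x}$ to Lemma \ref{nicelemma}) is correct and matches the paper. The gap is in the $F_\kappa$-verification. First, a telling slip: $A$ is clopen in $S_\kappa\setminus\singleton{x}$, \emph{not} in $S_\kappa$ (it cannot be, being non-compact). Your ``key observation'' survives this --- a clopen subset of $Y$ missing $x$ is compact and open in $S_\kappa$, hence clopen in $S_\kappa$ --- but the next step does not: if $x\in U$, then $U\setminus\singleton{x}$ does \emph{not} have type $<\kappa$ in $S_\kappa$. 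Indeed, $U$ contains a clopen-in-$Y$ neighbourhood $W$ of $x$, and $W\setminus\singleton{x}$ is a clopen non-compact subset of $S_\kappa\setminus\singleton{x}$ (non-compact because $x$ is not isolated in $Y$), so by Lemma \ref{nicelemma} its $S_\kappa$-type is exactly $\kappa$. Moreover, if $U\setminus\singleton{x}$ were a union of fewer than $\kappa$ clopen subsets $B_\beta$ of $S_\kappa$, the sets $B_\beta\cap W$ would be clopen in $S_\kappa$ (as $W$ is closed in $S_\kappa$ and $x\notin B_\beta$) and would exhibit $W\setminus\singleton{x}$ as having $S_\kappa$-type $<\kappa$ --- a contradiction. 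So the very lemma you rely on elsewhere shows that you cannot separate $U\setminus\singleton{x}$ from $V$ by applying the $F_\kappa$-property \emph{of $S_\kappa$}.

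The repair, which is how the paper argues, is to use the $F_\kappa$-property of the subspace $A$ rather than of $S_\kappa$: the sets $U\cap A$ and $V\cap A$ do have $A$-type less than $\kappa$ (each clopen-in-$Y$ constituent $W$ of $U$ or $V$ gives $W\cap A$ clopen in $A$, whether or not $x\in W$), and $A$ is itself an $F_\kappa$-space by \cite[14.1]{Ultrafilters}. Hence $U\cap A$ and $V\cap A$ have disjoint closures in $A$; since $A$ is open in $Y$, no point of $A$ lies in $\closure{U}^Y\cap\closure{V}^Y$, and your correct remark that $x\notin\closure{V}^Y$ when $x\in U$ finishes the separation. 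Without importing the fact that the $F_\kappa$-property passes to $A$ (or an equivalent workaround), the $F_\kappa$-step of your proof does not go through.
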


\begin{proof}
Let $X$ be the closure of $A$ in $S_\kappa$, i.e.\ $X=A \cup \singleton{x} \subset S_\kappa$. Then $X$ is a compact zero-dimensional space of weight $\kappa$. We check for the remaining $\kappa$-Parovi\v{c}enko properties.

To show that $X$ has the $F_\kappa$-space property, let $U$ and $V$ be disjoint open sets of $X$ of type less than $\kappa$. By normality, it suffices to show that $U$ and $V$ have disjoint closure in $X$. Suppose that $x$ belongs to $U \cup V$. Assume $x \in U$, so that $x$ does not belong to the closure of $V$. The sets $U\cap A$ and $V\cap A$ are of $A$-type less than $\kappa$. And since $A$ is an $F_\kappa$-space by \cite[14.1]{Ultrafilters}, they have disjoint closure in $A$, and therefore in $X$. Next, suppose that $x$ does not belong to $U \cup V$. Then $U$ and $V$ are subsets of $A$, 
and consequently of $S_\kappa$-type less than $\kappa$. Thus, $U$ and $V$ have disjoint closures in $S_\kappa$, and hence in $X$. This establishes that $X$ is an $F_\kappa$-space.

To show that $X$ has the property that every non-empty intersection of less than $\kappa$-many clopen sets has non-empty interior, suppose that $U=\bigcap_{\alpha < \beta} U_\alpha$ is a non-empty set, $\beta < \kappa$ and all $U_\alpha$ are clopen subsets of $X=A \cup \singleton{x}$. If $U$ has empty intersection with $A$, then all $X \setminus U_\alpha$ are clopen subsets of $S_\kappa$. It follows that $A=\bigcup_{\alpha < \beta} X \setminus U_\alpha$ is a clopen non-compact subspace $S_\kappa \setminus \singleton{x}$ of type less than $\kappa$, contradicting Lemma \ref{nicelemma}. Thus, $U$ intersects $A$, and their intersection has non-empty interior in $S_\kappa$.
\end{proof}

\begin{mythm}
\label{ClassificationCompactifications2}
Let $x$ be a point in $S_\kappa$. Every finite compactification of $S_\kappa \setminus \singleton{x}$ is homeomorphic to $S_\kappa$. Moreover, at most one point in the remainder of a finite compactification is not a $P_\kappa$-point.
\end{mythm}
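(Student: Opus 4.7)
The proof will mirror that of Theorem \ref{ClassificationCompactifications}, using Lemmas \ref{superlemma} and \ref{theorem1} in place of Lemmas \ref{lemma123} and \ref{theorem1}. The only ingredient not immediately available is the observation that $S_\kappa \oplus S_\kappa \cong S_\kappa$, which will be verified directly from the topological characterisation of $S_\kappa$: the disjoint sum is a compact, zero-dimensional space of weight $\kappa$ without isolated points, and both the $F_\kappa$-space property and the property that $<\kappa$-intersections of open sets have non-empty interior pass to finite disjoint sums, since any two relevant open sets either lie in a single summand or are topologically separated across summands. Combined with property $(\star)$, supplied by Lemma \ref{superlemma}, Lemma \ref{theorem1} will immediately yield the first assertion.

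For the second assertion, I will argue by contradiction in complete analogy with the $\wstar$-case. Suppose a finite compactification $Z$ of $S_\kappa \setminus \singleton{x}$ contains two non-$P_\kappa$-points $\infty_1$ and $\infty_2$ in its remainder. By zero-dimensionality of $Z$, choose disjoint clopen neighbourhoods $A_i \ni \infty_i$ so that $A'_i = A_i \setminus \singleton{\infty_i}$ are clopen non-compact subsets of $S_\kappa \setminus \singleton{x}$. Lemma \ref{superlemma} then produces homeomorphisms $A_i \cong S_\kappa$ and $A'_i \cup \singleton{x} \cong S_\kappa$, which I may arrange to identify $\infty_i$ with $x$ and to fix $A'_i$ pointwise. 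Since $\infty_i$ is not a $P_\kappa$-point of $A_i$, there exists an open set $F_i \subset A'_i$ of type less than $\kappa$ in $A_i$ with $\infty_i \in \closure{F_i} \setminus F_i$; the identification will then give $x \in \closure{F_i} \setminus F_i$ inside $A'_i \cup \singleton{x}$, and hence also inside $S_\kappa$.

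The main obstacle is to promote the bound on the type of $F_i$ from the subspace $A'_i \cup \singleton{x}$ to the ambient space $S_\kappa$. Writing $F_i = \bigcup_{\alpha < \tau} C_\alpha$ with $\tau < \kappa$ and each $C_\alpha$ clopen in $A'_i \cup \singleton{x}$, each $C_\alpha$ lies in $F_i$ and therefore avoids $x$; being closed in the compact space $A'_i \cup \singleton{x}$ it is compact, and being a subset of the $S_\kappa$-open set $A'_i$ it is open in $S_\kappa$, so it is clopen in $S_\kappa$. Hence $F_i$ has $S_\kappa$-type at most $\tau < \kappa$. Consequently, $F_1$ and $F_2$ are disjoint open subsets of $S_\kappa$ of type less than $\kappa$, both having $x$ in their closure, which contradicts the $F_\kappa$-space property of $S_\kappa$ and completes the proof.
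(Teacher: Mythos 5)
Your proof is correct and takes essentially the same route as the paper, whose entire proof is ``As in Theorem~\ref{ClassificationCompactifications}.'' You in fact supply two details the paper leaves implicit --- the verification that $S_\kappa \oplus S_\kappa \cong S_\kappa$ and the promotion of the type bound on $F_i$ from $A'_i \cup \singleton{x}$ to $S_\kappa$ via compact-open (hence clopen) pieces --- and both are handled correctly.
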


\begin{proof}
As in Theorem \ref{ClassificationCompactifications}.
 \end{proof} 
 
As in the case of $\wstar$, the spaces $S_\kappa \setminus \singleton{x}$ split into complementary clopen non-compact sets  \cite[14.2]{Ultrafilters} and therefore have arbitrarily large finite compactifications by Lemma \ref{lemm2}.
Again, we obtain as a corollary that $S_\kappa$ contains $P_\kappa$-points. 

	  

\begin{thebibliography}{99}
		
	
			
		\bibitem{Ultrafilters}
			W.W.\ Comfort, S.\ Negrepontis, The Theory of Ultrafilters, Springer-Verlag, Berlin, 1974.
						
		\bibitem{douwenkunenmill}
			E.K.\ van Douwen, K.\ Kunen, J.\ van Mill, There can be $C^\ast$-embedded dense proper subspaces in $\beta \omega - \omega$, Proc.\ Amer.\ Math.\ Soc.\ 105 (2) (1989) 462-470. 
					
		\bibitem{fine}
			N.J.\ Fine, L.\ Gillman, Extension of continuous functions in $\beta \N$, Bull.\ Amer.\ Math.\ Soc.\ 66 (1960) 376-381. 
			
			\bibitem{Gillmann}
			L.\ Gillman, The space $\beta \N$ and the continuum hypothesis, in: General Topology and its Relations to Modern 				Analysis and Algebra, Proceedings of the second Prague topological symposium, Praha, 1967, pp.\ 144-146.
			
			
		\bibitem{Glicksberg}
			I.\ Glicksberg, Stone-\v{C}ech Compactifications of Products, Trans.\ Amer.\ Math.\ Soc.\ 90 (3) (1959) 369-382.
							
		\bibitem{Intro}
			J.\ van Mill, An Introduction to $\beta \w$, in: K.\ Kunen, J.E.\ Vaughan (Eds.), Handbook of Set-Theoretic Topology, Elsevier Science, 1984, 	pp.\ 503-567.
			
		\bibitem{zerodiv}
			J.\ Mioduszewski, Compact Hausdorff spaces with two open sets, Colloq.\ Math.\ 39 (1978) 35-40. 

			
		\bibitem{diversity}
			J.\ Norden, S.\ Purisch, M.\ Rajagopalan, Compact spaces of diversity two, Top.\ Appl.\ 70 (1996) 1-24.
			
		\bibitem{pitz}
			M.F.\ Pitz, R.\ Suabedissen, The Stone-\v{C}ech compactifications of $\wstar \setminus \singleton{x}$ and $S_\kappa \setminus \singleton{x}$, arXiv:1310.0678.
			
		\bibitem{Cantor}
			A.H.\ Schoenfeld, G.\ Gruenhage, An alternate characterization of the Cantor set, Proc.\ Amer.\ Math.\ Soc.\ 53 (1) (1975) 235-236.
			
		\bibitem{Woods} R.G.\ Woods, Zero-dimensional compactifications of locally compact spaces, Can.\ J.\ Math.\ 26 (4) (1974) 920-930.	
			
				
 
	 \end{thebibliography}
\end{document}